\documentclass[12pt]{amsart}
\usepackage{amsmath,amssymb,amsbsy,amsfonts,latexsym,amsopn,amstext,
                                               amsxtra,euscript,amscd,bm}
\usepackage[margin=1in]{geometry}                    
\usepackage{url}
\usepackage[colorlinks,linkcolor=blue,anchorcolor=blue,citecolor=blue]{hyperref}
\usepackage{color}
\usepackage{enumerate}
\usepackage{hhline}
\usepackage{graphicx, float}
\usepackage{subcaption}
\begin{document}

\newtheorem{theorem}{Theorem}[section]
\newtheorem{lemma}[theorem]{Lemma}
\newtheorem{conjecture}[theorem]{Conjecture}
\newtheorem{proposition}[theorem]{Proposition}
\newtheorem{corollary}[theorem]{Corollary}
\newtheorem{claim}[theorem]{Claim}
\newtheorem{example}[theorem]{Example}
\theoremstyle{definition}
\newtheorem{remark}[theorem]{Remark}
\newtheorem{definition}[theorem]{Definition}

\def\E{{\mathbb E}}
\def\F{{\mathbb F}}
\def\P{{\mathbb P}}
\def\R{{\mathbb R}}
\def\Z{{\mathbb Z}}
\def\C{{\mathbb C}}
\def\N{{\mathbb N}}
\def\O{{\mathbb O}}
\def\B{{\mathcal B}}
\def\G{{\mathcal G}}
\def\Hy{{\mathcal H}}
\def\M{{\mathcal M}}
\def\S{{\mathcal S}}
\def\T{{\mathcal T}}
\def\U{{\mathcal U}}

\newcommand{\rst}[1]{\ensuremath{{\mathbin\upharpoonright}\raise-.5ex\hbox{$#1$}}}


\title[On the number of directions formed by Cartesian products in $\mathbb{F}_{p^2}^2$]{On the number of directions formed by Cartesian products in $\mathbb{F}_{p^2}^2$}

 \author[A. Mohammadi] {Ali Mohammadi}

\email{ali.mohammadi.np@gmail.com}

\pagenumbering{arabic}

\begin{abstract}
We prove a lower bound on the number of directions determined by Cartesian products $A\times A$ in the affine plane over the finite field $\mathbb F_{p^2}$. Our lower bound holds for sets of size $p^{2/3}<|A|<p$, which are not contained in any affine copy of $\mathbb F_p$.

The proof combines a structural result of Li and Roche-Newton on the set of directions formed by Cartesian products with a lower bound of Fancsali, Sziklai and Tak\'{a}ts. A key step shows that, unless the set of directions exhibits closure properties forcing subfield structure, one obtains a direction for which an algebraic multiplicity parameter in the latter theorem can be made explicit.
\end{abstract}

\maketitle

\section{Introduction}
Let $q$ be a prime power. Given a point set $U\subset \mathbb{F}_q^2$, if $|U|>q$, a simple pigeonhole argument shows that it determines all possible directions. Indeed, for each fixed slope, there are exactly $q$ parallel lines of that slope, so if $U$ contains more than $q$ points, at least one such line must contain two points of $U$, yielding that direction.

For $|U|\le q$, the situation changes substantially, for example the point set may be collinear, in which case it determines only a single direction. This is the only degenerate case over prime fields, where sharp lower bounds are known (see e.g. \cite{DiBenedetto2021} and \cite{Szonyi1999}). Over general finite fields, point sets may be highly structured, particularly when concentrated on affine copies of subfields, and the number of determined directions can be far smaller than $|U|$. The extremal case $|U|=q$ was resolved by Ball~\cite{Ball2003}, who established the sharp lower bound $(q+1)/2$ outside the degenerate case of a line. For $|U|<q$, the problem was studied further by Fancsali, Sziklai and Tak\'ats~\cite{Fancsali2013}, who obtained general lower bounds via the standard R\'{e}dei polynomial method~\cite{Redei1970}. However, unlike the $|U|=q$ case, no concrete geometric characterisation is currently known for the point sets attaining the optimal lower bound.

In this note, we consider Cartesian products over the extension field
$\mathbb F_{p^2}$. For $A\subset\mathbb F_{p^2}$, the directions determined
by $A\times A$ are defined by
\[
D(A):=
\left\{
\frac{a_1-a_2}{a_3-a_4}:
a_1,a_2,a_3,a_4\in A,\ a_3\neq a_4
\right\}.
\]
Here the main obstruction is that $A$ may concentrate inside an affine copy of the
proper subfield $\mathbb F_p$. If $A\subset c\mathbb F_p+d$, then $D(A)\subseteq \mathbb F_p$, so $A\times A$ determines at most $p$ directions. This is far below the quadratic
scale when $|A|$ is close to $p$.

Our main result shows that, once this obstruction is excluded,
one recovers a  quadratic lower bound. The main result of \cite{Fancsali2013}, which we use as a preliminary result, could be refined using arguments of \cite{DiBenedetto2021} to recover the bound $|A|(|A|-1)$ which, as discussed in the latter, is sharp. It remains desirable to extend this lower bound to the range $|A|<p^{2/3}$, noting that the best currently known (sub-quadratic) bounds, in this range, can be obtained from
\cite{Macourt2020} and \cite{Petridis2019} based on methods from incidence geometry.
\begin{theorem}\label{thm:main}
Let $q=p^2$, for an odd prime $p$ and let $A\subset \mathbb F_q$.  
There exists an absolute constant $k>1$, such that if $k\cdot p^{2/3}<|A|<p$ and
 $A\not \subset c\mathbb F_p+d$, for every $c\in\mathbb F_q^\ast$ and $d\in\mathbb F_q$,
then $|D(A)|\geq (|A|^2+1)/2$.
\end{theorem}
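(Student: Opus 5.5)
We follow the route announced in the abstract. We combine a structural dichotomy for $D(A)$ (Li and Roche-Newton) with the R\'edei-polynomial lower bound of Fancsali, Sziklai and Tak\'ats for point sets $U\subset\mathbb F_q^2$ with $|U|<q$.

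The FST theorem gives a bound of roughly $(|U|-1)/(t+1)$ directions. Here $t$ is an algebraic multiplicity parameter attached to a direction not determined by $U$ (equivalently, to a non-determined slope $m$). Concretely, one writes the R\'edei polynomial $\prod_{(a,b)\in U}(X+aY-b)$. The relevant quantity is the smallest degree index at which the lacunary or power-sum coefficients fail to vanish, or equivalently $p^e$-th power structure. With $U=A\times A$, $|U|=|A|^2<q$, the target $(|A|^2+1)/2$ corresponds to $t=1$, or to the $s=p^0$ branch of their statement. So the whole proof reduces to exhibiting a slope $m\notin D(A)$ for which this parameter is minimal, i.e.\ for which the multiset $\{a+mb: a,b\in A\}$ (all distinct, since $m\notin D(A)$) is not ``$p$-structured''. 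One way to phrase this is that some power sum $\sum_{a,b\in A}(a+mb)^j$ with $j$ not divisible by $p$ and $j$ in the relevant range is nonzero.

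\textbf{Step 1 (closure dichotomy).} We invoke the Li--Roche-Newton result. Either $D(A)$ is closed under $x\mapsto x+1$ and $x\mapsto 1/x$, or there is an explicit slope $m\notin D(A)$ of the form $m=d+1$ or $m=1/d$ with $d\in D(A)$. In the closed case, $D(A)$ is closed under addition of $\mathbb F_p$ shifts and inversion, and hence (by a standard argument, as in Li--Roche-Newton) under multiplication. So $D(A)$ is a subfield of $\mathbb F_{p^2}$, or $D(A)=\mathbb F_q$. The case $D(A)=\mathbb F_q$ is trivially fine since $q>|A|^2$. If $D(A)=\mathbb F_p$, then $(a_1-a_2)/(a_3-a_4)\in\mathbb F_p$ for all choices, so $A\subset c\mathbb F_p+d$, which is excluded. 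Thus we may assume that a ``neighbouring'' non-direction $m=d+1$ (or $1/d$) exists, with $d=(a_1-a_2)/(a_3-a_4)\in D(A)$.

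\textbf{Step 2 (computing the parameter).} For such $m$ we use the fact that $d\in D(A)$ and $m\notin D(A)$. The goal is to show that the polynomial $\prod_{a,b}(X-(a+mb))$ has a nonvanishing coefficient at a position of degree below the threshold forcing $t>1$, with a non-$p$-th-power exponent. The plan is to compare $\{a+mb\}=\{a+b+db\}$ with the set $\{a+db\}$, which has a coincidence (since $d$ is a direction), and extract from the first nonzero power sum an exponent coprime to $p$. Here the size hypothesis $k p^{2/3}<|A|<p$ should enter. The degrees involved are at most $|A|^2<p^2$. The lower bound $|A|>kp^{2/3}$ is what rules out the alternative FST structure, in which the point set would have to be an $\mathbb F_p$-linear set of size $p$ or the direction count would already exceed $|A|^2/2$ by an incidence estimate. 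In that alternative case we bound via Petridis/Macourt-type incidence bounds. Those give $|D(A)|\gg |A|^{3/2}\cdots$, which beats the competing structural bound only when $|A|\gg p^{2/3}$.

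\textbf{Main obstacle.} We expect Step 2, making the multiplicity parameter explicit for the specific slope $m=d+1$, to be the main obstacle. We would first try to show directly that if the parameter were $\ge p$, then the map $x\mapsto$ (the corresponding lacunary polynomial) forces $\{a+mb\}$ to be a union of $\mathbb F_p$-cosets. That would contradict $|A|^2<p^2$ being combined with the coincidence coming from $d\in D(A)$, possibly after an incidence count using $|A|>kp^{2/3}$. Once $t=1$ is established, FST yields $|D(A)|\ge(|A|^2+1)/2$.
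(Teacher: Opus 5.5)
Your Step~1 dichotomy is harmless, but Step~2 aims at the wrong object, so there is a genuine gap. In Theorem~\ref{thm:FancsaliFq} the parameters $s$ and $t$ are minima over \emph{determined} directions $y\in D$. At a slope $m\notin D(A)$ the $|A|^2$ values $ma-b$ are pairwise distinct, so $R(X,m)$ is squarefree, divides $X^q-X$, and $H(X,m)=-X$. Any version of the parameter at $m$ is therefore trivially minimal and says nothing about $t=\min_{y\in D}t(y)$. No power-sum computation at $m=d+1$ can certify $t=1$. Nor would $s=1$ suffice, since the lower bound involves only $t$.

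What you need is a determined $y\in D(A)$ with $H(X,y)\notin\mathbb F_q[X^p]$. The missing criterion is Claim~\ref{lem:polygeomcriterion}: if $|yA-A|>p$ then $t(y)=1$. Indeed, suppose $R_yQ_y\in\mathbb F_q[X^p]$. Each root $z\in yA-A$ has multiplicity $1\le r_y(z)\le|A|<p$ in $R_y$, so $Q_y$ must supply at least $p-r_y(z)$ more. This gives $p^2-|A|^2=\deg Q_y\ge p|yA-A|-|A|^2$, i.e.\ $|yA-A|\le p$.

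The non-direction $1+r$ (with $r\in D(A)$) is not used to compute anything at $1+r$. It is used in a sumset argument that produces such a $y$, and this is where $|A|>kp^{2/3}$ enters. Normalise $0,1\in A$, so that $A\subset D(A)$ and $\pm1,\pm r\in D(A)$. Suppose $|yA-A|\le p$ for every $y\in D(A)$. Lemmas~\ref{lem:dilatesumsetcardinality} and~\ref{lem:plunneckerefined} give, for some $A_0\subseteq A$ with $|A_0|\gg|A|$,
$|A_0|\,|A|=|A_0+(1+r)A|\le|A_0+A+rA|\ll|A+A|\,|A+rA|/|A|\le p^2/|A|$.
This contradicts $|A|>kp^{2/3}$. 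The paper also treats $ar\notin D(A)$ (with $a\in A$) in the same way via Lemma~\ref{lem:plunnecke}, and then applies Lemma~\ref{lem:dirsubfieldcriterion}.

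The role you propose for the size hypothesis has no counterpart in Theorem~\ref{thm:FancsaliFq}. That theorem has no ``alternative FST structure'' to exclude, and incidence bounds of order $|A|^{3/2}$ could not reach $(|A|^2+1)/2$.

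A smaller point concerns Step~1. $D(A)\setminus\{0\}$ is automatically closed under inversion, so your alternative $m=1/d$ never occurs. Also, ``closed under $+1$ and inversion, hence a subfield'' is not the Li--Roche-Newton argument, which uses $AD(A)\subset D(A)$. For $q=p^2$ your claim does hold: $\mathrm{PSL}_2(\mathbb F_p)$ has exactly two orbits on $\mathbb P^1(\mathbb F_{p^2})$, namely $\mathbb P^1(\mathbb F_p)$ and its complement. With that argument only the $1+r$ case above would be needed, but you must supply this justification.
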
  

\section{Preliminary results}
Let $U\subset \mathbb{F}^2_q$ be a set of points. The associated R\'{e}dei polynomial is defined by
\[
R(X,Y):=\prod_{i=1}^{|U|} (X-a_iY+b_i), \quad (a_i, b_i)\in U.
\]
For a fixed direction $y\in\mathbb F_q$, the specialisation $R(X,y)$
encodes the intersections of $U$ with lines of slope $y$, since the roots of
$R(X,y)$ are precisely the intercepts $z$ such that the line
\[
\ell_{y,z}:=\{(x,yx-z):x\in\mathbb F_q\}
\]
meets $U$, and the multiplicity of a root equals the number of points of $U$
lying on that line. Since $R$ is monic, we may write $R(X,Y)Q(X,Y)=X^q+H(X,Y)$, with $\deg_X H<\deg_X R=|U|$.

\begin{definition}\label{def:sdef}
Let $U\subset\mathbb{F}^2_q$ and $D$ the set of directions formed by $U$. For $y\in D$, by $s(y)$ we denote the greatest power of $p$ such that each line $\ell$ of
direction $y$ meets $U$ in zero modulo $s(y)$ points. Let $s = \min_{y\in D}s(y)$.
\end{definition}

\begin{definition}\label{def:tdef}
Suppose $|D|\ge 2$, so that $H(X,y)$ is not constant. For each
$y\in D$, let $t(y)$ denote the maximal power of $p$ such that
\[
H(X,y)\in \mathbb F_q[X^{t(y)}]
\setminus
\mathbb F_q[X^{p\cdot t(y)}].
\]
Equivalently, there exists a polynomial $f_y(X)\notin \mathbb F_q[X^p]$ such that $H(X,y)=f_y(X)^{t(y)}$. Let $t=\min_{y\in D} t(y)$. If $H(X,y)\equiv a$ is constant (equivalently $D=\{y\}$), then we define $
t=t(y)=q$.
\end{definition}
For $y\in D$, by \cite[Propositions~9 and 11]{Fancsali2013}, we have
\[
R(X,y)\in \mathbb F_q[X^{s(y)}]\setminus \mathbb F_q[X^{p\cdot s(y)}]\quad\text{and}\quad H(X, y), Q(X, y)\in \mathbb F_q[X^{s(y)}],
\]
and $s(y)\le t(y)$.

We recall \cite[Theorem~17]{Fancsali2013}, which gives a lower bound on the number of directions formed by general point sets of size less than $q$. The relevant lower bound depends on the algebraic multiplicity parameter $t$, defined via an auxiliary polynomial construction (Definition~\ref{def:tdef}), rather than the more directly geometric parameter $s$. In general, it is not known whether these parameters coincide.
\begin{theorem}\label{thm:FancsaliFq}
Let $q$ be a prime power and $U\subset \mathbb{F}^2_q$ be an arbitrary set of points. Let $D$ denote the
set of directions determined by $U$. Let $s$ and $t$ be defined geometrically
and algebraically as in Definitions~\ref{def:sdef} and \ref{def:tdef}. Then one of the following
holds:

\[
1=s\le t<q\quad\text{and}\quad\frac{|U|-1}{t+1}+1
\le
|D|
\le
q;
\]

\[
1<s\le t<q\quad\text{and}\quad\frac{|U|-1}{t+1}+1
\le
|D|
\le
\frac{|U|-1}{s-1}-1.
\]
\end{theorem}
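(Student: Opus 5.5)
Since Theorem~\ref{thm:FancsaliFq} is quoted from \cite[Theorem~17]{Fancsali2013}, I would reprove it with the R\'edei polynomial method. Write $n=|U|<q$. Take $Q(X,Y)$ to be the quotient and $-H$ the remainder when $X^q$ is divided by $R(X,Y)$ as a polynomial in $X$, so that $R Q=X^q+H$ with $H=\sum_{j<n}h_j(Y)X^j$. Because $R$ is a product of forms that are linear in $(X,Y)$ together with a constant term, the usual degree count should give $\deg h_j\le q-j$.

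\textbf{Step 1: $H$ has small $X$-degree.} For $y\notin D$, the $n$ intercepts of lines of slope $y$ through $U$ are distinct elements of $\F_q$. Hence $R(X,y)\mid X^q-X$, and since $\deg_X H<n$ this forces $H(X,y)=-X$. So for every $j\ge 2$, $h_j$ vanishes at the $q-|D|$ non-directions while $\deg h_j\le q-j$. If $j>|D|$ then $q-j<q-|D|$, so $h_j\equiv 0$. This gives $\deg_X H(X,y)\le |D|$ for every $y$.

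\textbf{Step 2: the lower bound.} Assume $|D|\ge2$; the case $|D|=1$ is the degenerate case $t=q$, which is excluded from the conclusion. Fix $y\in D$ with $t(y)=t$ (the minimum). Then $H(X,y)=f(X)^t$ with $f\notin\F_q[X^p]$ and $\deg f\le |D|/t$. Since $t<q$,
\[
X^q+H(X,y)=g(X)^t,\qquad g:=X^{q/t}+f .
\]
The derivative $g'=f'$ is nonzero, because $q/t$ is a multiple of $p$ and $f\notin\F_q[X^p]$. Every root of $R(X,y)$ is a root of $g$. The intercepts $z$ also lie in $\F_q$, so they are roots of $X^q-X$, and hence satisfy $f(z)^t=-z$. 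A root of $R(X,y)$ of multiplicity $m$ must be a root of $g$ of multiplicity at least $\lceil m/t\rceil$. Roots of $g$ of multiplicity greater than one are roots of $f'$, and there are at most $\deg f-1$ of them counted with multiplicity. Comparing these multiplicity counts with $\sum m=n$ should give
\[
n-1\le (t+1)(|D|-1),
\]
which is the stated lower bound. The key input is the factorisation $X^q+f^t=g^t$, which transfers multiplicity information from $R$ to the low-degree polynomial $f$.

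\textbf{Step 3: the upper bounds and $s\le t$.} The inequality $s\le t$ is the quoted \cite[Propositions~9 and 11]{Fancsali2013}, and $|D|\le q$ is trivial. For $s>1$, fix $P\in U$. Each line through $P$ with slope in $D$ meets $U$ in a positive multiple of $s$ points, so it contains at least $s-1$ further points. Every other point of $U$ lies on exactly one such line. Hence $n-1\ge |D|(s-1)$. The extra $-1$ in the claimed upper bound needs a sharper count. I would look for one line through $P$ (or one direction) carrying at least $2s$ points: otherwise every line meets $U$ in exactly $0$ or $s$ points for every direction in $D$, which I would show contradicts $s\le t<q$ via the structure $R(X,y)\in\F_q[X^{s}]$.

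I expect the multiplicity bookkeeping in Step 2 to be the main obstacle, together with this last refinement of the upper bound by $1$.
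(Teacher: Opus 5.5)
\textbf{Main gap.} The genuine gap is the extra $-1$ in the second alternative. The route you propose for it cannot work: a configuration in which every line with slope in $D$ meets $U$ in $0$ or $s$ points is compatible with $1<s\le t<q$. In fact $|D|\le\frac{|U|-1}{s-1}-1$ is false for arbitrary $U$.

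Here is a counterexample. Take $q=p^3$, $\omega\in\mathbb{F}_q\setminus\mathbb{F}_p$, and
\[
U=\{(a+\omega b,\,b):a,b\in\mathbb{F}_p\},
\]
the image of $\mathbb{F}_p\times\mathbb{F}_p$ under the invertible $\mathbb{F}_q$-linear map $\phi(x,y)=(x+\omega y,y)$. Then $|U|=p^2<q$.

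\begin{itemize}
\item The difference vectors are $(\alpha+\omega\beta,\beta)$ with $(\alpha,\beta)\in\mathbb{F}_p^2\setminus\{0\}$. So the vertical direction is not determined, and $D=\{0\}\cup\{(\omega+c)^{-1}:c\in\mathbb{F}_p\}$ has $p+1$ elements.
\item A line through a point of $U$ with direction $\phi(v)$, $v\in\mathbb{F}_p^2\setminus\{0\}$, meets $U$ in exactly the $p$ points $P+\lambda\phi(v)$, $\lambda\in\mathbb{F}_p$. Hence $s=p$, and $p\le t<q$ since $|D|\ge 2$.
\end{itemize}

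Yet $\frac{|U|-1}{s-1}-1=p<|D|$. Your pencil count gives $|D|\le\frac{|U|-1}{s-1}$, which is the correct general bound and is attained here.

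The $-1$ is only available under an extra hypothesis: the vertical direction is also determined, and vertical lines meet $U$ in multiples of $s$. This holds for $U=A\times A$, where $s(\infty)=s(0)$ by symmetry. In that case the vertical line through $P$ supplies $s-1$ further points, giving $n-1\ge(|D|+1)(s-1)$.

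Likewise, when $|D|=1$ we have $t=q$ by Definition~\ref{def:tdef}, so neither alternative holds. That case is an exception to the statement as written, not something ``excluded from the conclusion''.

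\textbf{Lower bound.} Your Steps~1 and~2 are a correct R\'edei-polynomial proof of the lower bound, which is the only part the paper uses. The paper itself gives no proof and simply cites \cite[Theorem~17]{Fancsali2013}.

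The bookkeeping you left open does close. Let $Z$ be the set of intercepts, and let $m_z,\mu_z$ be the multiplicities of $z$ in $R(X,y)$ and in $g$.
\begin{itemize}
\item Since $R(X,y)\mid g^t$, we have $m_z\le t\mu_z$.
\item Since each multiple root of $g$ is a root of $g'=f'\neq 0$, we have $\sum_{z\in Z}(\mu_z-1)\le\deg f-1$.
\item $Z$ lies in the zero set of $f^t+X$. This polynomial is nonzero: for $t=1$ because $y\in D$, and for $t\ge p$ because $f^t$ has no linear term. Hence $|Z|\le t\deg f$.
\end{itemize}
Therefore
\[
n\le t\bigl(|Z|+\deg f-1\bigr)\le t\bigl((t+1)\deg f-1\bigr)\le (t+1)|D|-t,
\]
which is $|D|\ge\frac{n-1}{t+1}+1$. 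This holds directly in the given coordinates, with no need to move an undetermined direction to infinity.
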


See \cite[Lemma~2.50]{TaoVu2006} for the following.
\begin{lemma}\label{lem:dilatesumsetcardinality}
Let $X \subset \mathbb{F}_q$ and let $r \in \mathbb{F}_q^\ast$. If $r \notin D(X)$, then for any nonempty subsets $X_1, X_2 \subseteq X$, we have $|X_1||X_2| = |X_1 \pm rX_2|$.
\end{lemma}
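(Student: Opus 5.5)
The statement is Lemma~\ref{lem:dilatesumsetcardinality}. The plan is to prove it by a direct injectivity argument: the map $X_1\times X_2\to X_1\pm rX_2$, $(x_1,x_2)\mapsto x_1\pm rx_2$, is surjective by definition. It therefore suffices to show it is injective, since then $|X_1\pm rX_2|=|X_1\times X_2|=|X_1||X_2|$.

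First consider the sum $X_1+rX_2$. Suppose $x_1+rx_2=x_1'+rx_2'$ with $x_1,x_1'\in X_1$ and $x_2,x_2'\in X_2$. Then
\[
x_1-x_1'=r(x_2'-x_2).
\]
If $x_2\neq x_2'$, divide by $x_2'-x_2$ to get
\[
r=\frac{x_1-x_1'}{x_2'-x_2}.
\]
Since $X_1,X_2\subseteq X$, this quotient is of the form $(a_1-a_2)/(a_3-a_4)$ with $a_i\in X$ and $a_3\neq a_4$. Hence $r\in D(X)$, a contradiction. So $x_2=x_2'$, and then $x_1=x_1'$ follows at once.

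The difference $X_1-rX_2$ is handled the same way. From $x_1-rx_2=x_1'-rx_2'$ we get $x_1-x_1'=r(x_2-x_2')$. If $x_2\ne x_2'$, then $r=(x_1-x_1')/(x_2-x_2')\in D(X)$, again a contradiction. Alternatively, one can note that $D(X)$ is closed under negation, by swapping $a_3$ and $a_4$, so $-r\notin D(X)$ and the sum case applies with $-r$ in place of $r$.

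No step here is a genuine obstacle. The only point needing care is that the definition of $D(X)$ allows numerators with $a_1=a_2$ and requires only $a_3\neq a_4$. This is exactly the form of the quotient produced above, so the contradiction goes through. The case $x_1=x_1'$ and $x_2\neq x_2'$ would force $r=0$, which is also excluded since $r\in\mathbb F_q^\ast$.
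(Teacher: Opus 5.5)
Your proof is correct: the injectivity of $(x_1,x_2)\mapsto x_1\pm rx_2$ on $X_1\times X_2$, where any collision with $x_2\neq x_2'$ would write $r$ as a quotient of the form $(a_1-a_2)/(a_3-a_4)$ with $a_i\in X$ and $a_3\neq a_4$, is the standard argument behind Tao--Vu, Lemma~2.50. The paper does not reprove the lemma and simply cites that reference, and your treatment of the difference case (directly, or via $D(X)=-D(X)$) is also fine.
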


The following two variants of the Pl\"unnecke-Ruzsa inequality appear in \cite{KatzShen2008}.
\begin{lemma}\label{lem:plunnecke}
Let $X, B_1, \dots, B_k$ be nonempty subsets of an abelian group. Then
\[
|B_1 + \cdots + B_k|
\le
\frac{|X+B_1|\cdots |X+B_k|}{|X|^{k-1}}.
\]
\end{lemma}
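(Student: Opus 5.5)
The natural approach is Petridis' method, with the sharp product form $\prod_i |X+B_i|/|X|^{k-1}$ recovered by a tensor-power argument. We may assume the ambient abelian group $G$ is finitely generated, since only finitely many elements are involved. Write $K_i:=|X+B_i|/|X|$, so the goal is $|B_1+\cdots+B_k|\le K_1\cdots K_k|X|$.

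The first step is Petridis' lemma. Let $Z\subseteq X$ be a nonempty subset minimising $|Z+B|/|Z|$, with minimum value $K'\le |X+B|/|X|$. Then $|Z+B+C|\le K'|Z+C|$ for every finite $C\subseteq G$. The proof is the usual induction on $|C|$: adjoin one element $c$ at a time and compare $Z+B+c$ with the part coming from $\{z\in Z: z+c\in Z+C\}$, using minimality of $Z$. Applying this once with $B=B_1$ and $C=\{0\}$ gives the case $k=1$, and iterating it with a single set $B$ gives $|hB|\le |Z+hB|\le K'^h|Z|$, which is Plünnecke's inequality.

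The second step treats distinct $B_i$ by encoding them as one set in a larger group. Work in $G\times\Z^k$ and put
\[
B:=\bigcup_{i=1}^k \bigl(B_i\times\{e_i\}\bigr), \qquad X':=X\times\{0\}.
\]
Then $|X'+B|=\sum_i|X+B_i|$, and $kB$ contains the fibre $(B_1+\cdots+B_k)\times\{e_1+\cdots+e_k\}$. The step above therefore gives
\[
|B_1+\cdots+B_k|\le \Bigl(\sum_i K_i\Bigr)^k|X|,
\]
which is weaker than required because $\sum_i K_i$ replaces $k(\prod_i K_i)^{1/k}$.

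The third step removes this loss in two moves. To balance the ratios, replace $B_i$ by $B_i+\lambda_i$ placed in separate coordinates of a product group, or take unequal Cartesian powers of the $B_i$. After balancing all $K_i$ are equal, so the AM--GM loss is only the constant factor $k^k/k!$. Then run the tensor-power trick: apply the bound to $X^N$ and $B_i^N$ inside $G^N$, take $N$-th roots, and let $N\to\infty$. This removes the constant and yields exactly $\prod_i|X+B_i|/|X|^{k-1}$.

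I expect the balancing step to be the main obstacle, because the ratios $K_i$ cannot be rescaled independently by products alone. If the balancing does not go through, the fallback is to prove directly, by induction on $k$, the stronger statement that some nonempty $Z\subseteq X$ satisfies $|Z+B_1+\cdots+B_k|\le K_1\cdots K_k|Z|$. In the inductive step one would choose $Z$ via Petridis' minimality for $B_k$ relative to the current base set, and then apply the hypothesis to a further subset of $Z$. The delicate point there is keeping the ratios $|Z+B_i|/|Z|$ controlled by the original $K_i$ when passing to subsets; this is exactly the issue Katz and Shen handle in \cite{KatzShen2008}, and I would follow their argument at that step.
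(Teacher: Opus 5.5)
Your main route (Petridis' lemma, the lift $B=\bigcup_i B_i\times\{e_i\}$, balancing, tensor power) does prove the lemma. However, you leave the balancing step open, and only one reading of your suggestion for it works. Unequal Cartesian powers fail: padding $B_i^{n_i}$ with zero coordinates turns $B_1^*+\cdots+B_k^*$ into a product of sums of sub-collections of the $B_i$, not a power of $B_1+\cdots+B_k$. If $B_i+\lambda_i$ means a translate, it changes nothing. What works is to add a \emph{set} of size $\lambda_i$ in a fresh coordinate. Put $m_i=|X+B_i|$, $L=\mathrm{lcm}(m_1,\dots,m_k)$ and $\lambda_i=L/m_i$, and in $G\times\Z^k$ take
\[
X'=X\times\{0\},\qquad B_i'=B_i\times\{je_i:0\le j<\lambda_i\}.
\]
Then $|X'+B_i'|=\lambda_i m_i=L$ for every $i$, so the ratios become exactly equal; they are rational, so no limit is needed. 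Meanwhile $B_1'+\cdots+B_k'=(B_1+\cdots+B_k)\times\{\sum_i j_ie_i:0\le j_i<\lambda_i\}$ has size $\lambda_1\cdots\lambda_k|B_1+\cdots+B_k|$. Your Step 2 applied to $X',B_i'$ gives $\lambda_1\cdots\lambda_k|B_1+\cdots+B_k|\le (kL/|X|)^k|X|$, that is,
\[
|B_1+\cdots+B_k|\le k^k K_1\cdots K_k\,|X|
\]
for all finite nonempty sets in all abelian groups. The loss is $k^k$, not $k^k/k!$, because Step 2 bounds all of $kB$ rather than the share of the fibre over $e_1+\cdots+e_k$; this is harmless. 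Since the constant is uniform, the tensor-power step now goes through, using $|X^N+B_i^N|=|X+B_i|^N$ and $|B_1^N+\cdots+B_k^N|=|B_1+\cdots+B_k|^N$. So the fallback can be dropped. As sketched, it would in any case run into the real obstruction that $|Z+B_i|/|Z|$ is not controlled on subsets $Z\subseteq X$.

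The paper gives no proof of this lemma and simply cites \cite{KatzShen2008}. The inequality is Ruzsa's Pl\"unnecke inequality for different summands, classically proved with Pl\"unnecke's graph-theoretic method. That method yields the stronger statement that some nonempty $X'\subseteq X$ satisfies $|X'+B_1+\cdots+B_k|\le K_1\cdots K_k|X'|$, which is the form one iterates to obtain refinements such as Lemma~\ref{lem:plunneckerefined}. Your route avoids graph theory entirely, using only Petridis' lemma, a lift to a product group, and the tensor-power trick. Because the tensor power is taken on the sets themselves, and a minimising subset of $X^N$ need not be a product set, you recover only the cardinality bound, which is exactly what this lemma asserts. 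Before the tensor step, your argument does give the subset version with constant $k^k$: the Petridis minimiser has the form $Z_0\times\{0\}$ with $Z_0\subseteq X$. That weaker subset version already suffices to derive Lemma~\ref{lem:plunneckerefined} up to constants.
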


\begin{lemma}\label{lem:plunneckerefined}
Let $X, B_1, \dots, B_k$ be nonempty subsets of an abelian group. For any $0<\varepsilon<1$, there exists a subset $X' \subseteq X$
with $|X'|>(1-\varepsilon)|X|$
such that
\[
|X' + B_1 + \cdots + B_k|
\ll_{\varepsilon,k}
\frac{|X+B_1|\cdots |X+B_k|}{|X|^{k-1}}.
\]
\end{lemma}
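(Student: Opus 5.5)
We prove this by a greedy exhaustion argument. Its input is the stronger, subset form of the Pl\"unnecke--Ruzsa inequality. That form is what the proof of Lemma~\ref{lem:plunnecke} actually gives, for example via Petridis' argument or Ruzsa's graph-theoretic proof. For any nonempty finite $Y$ in the group it says: there is a nonempty $Y'\subseteq Y$ with
\[
|Y'+B_1+\cdots+B_k|\le \prod_{i=1}^k\frac{|Y+B_i|}{|Y|}\cdot |Y'|.
\]
We take this as the basic tool. The only point needing care is that Lemma~\ref{lem:plunnecke} as stated drops the factor $|Y'|\le |Y|$, whereas the argument needs the version that keeps it. If only the stated form were available, we would first rederive the subset version by Petridis' minimal-ratio argument.

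The construction runs in rounds. Set $Y_0=X$. As long as $|Y_j|\ge \varepsilon|X|$, apply the subset form to $Y=Y_j$ to get a nonempty $Z_j\subseteq Y_j$ with the displayed property, and put $Y_{j+1}=Y_j\setminus Z_j$. Each round removes at least one element, so the process stops after finitely many rounds, at the first $m$ with $|Y_m|<\varepsilon|X|$. Define
\[
X'=Z_0\cup\cdots\cup Z_{m-1}=X\setminus Y_m,
\]
so that $|X'|>(1-\varepsilon)|X|$.

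To bound the sumset, use two facts. Since $Y_j\subseteq X$, we have $|Y_j+B_i|\le |X+B_i|$. Since $Y_j$ was still in play at round $j$, we have $|Y_j|\ge\varepsilon|X|$. Subadditivity of sumsets over a union then gives
\[
|X'+B_1+\cdots+B_k|\le\sum_{j<m}|Z_j+B_1+\cdots+B_k|
\le \sum_{j<m}\frac{\prod_i|X+B_i|}{(\varepsilon|X|)^k}\,|Z_j|.
\]
The sets $Z_j$ are disjoint subsets of $X$, so $\sum_j|Z_j|\le |X|$. Therefore
\[
|X'+B_1+\cdots+B_k|\le \varepsilon^{-k}\,\frac{|X+B_1|\cdots|X+B_k|}{|X|^{k-1}},
\]
which is the claim with implied constant $\varepsilon^{-k}$.

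The main obstacle is the subset form of the Pl\"unnecke inequality used at each round, namely keeping the factor $|Y'|$ rather than $|Y|$. Everything after that is bookkeeping.
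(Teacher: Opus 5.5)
Your argument is correct. The paper gives no proof of this lemma; it is quoted from Katz--Shen. Your greedy exhaustion is the standard way this refinement is deduced: you repeatedly apply the subset form of the Pl\"unnecke--Ruzsa inequality to the unused part of $X$, stop once fewer than $\varepsilon|X|$ elements remain, and sum using $|Y_j+B_i|\le|X+B_i|$, $|Y_j|\ge\varepsilon|X|$ and $\sum_j|Z_j|\le|X|$. This gives the explicit constant $\varepsilon^{-k}$.

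One correction on sourcing. The subset form you use, with distinct summands $B_1,\dots,B_k$ and factor $\prod_i|Y+B_i|/|Y|$, is Ruzsa's theorem, proved via Pl\"unnecke graphs and a product construction. Petridis' minimal-ratio argument gives a subset statement directly only for a single repeated summand, $|Y'+hB|\le K^h|Y'|$. The reason is that a set minimising $|Z+B_1|/|Z|$ gives no control over $|Z+B_2|/|Z|$. So your fallback of rederiving the tool ``by Petridis' minimal-ratio argument'' is not a routine step, and you should simply cite Ruzsa's result.
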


A key ingredient in the proof of our main result is the following structural characterisation of sets of slopes, implicit in the proof of the main theorem of \cite{LiRocheNewton2011}.
\begin{lemma}\label{lem:dirsubfieldcriterion}
Let $X\subset \mathbb F_q$, and suppose that
\[
X\,D(X)\subset D(X)\quad \text{and}\quad 1+D(X)\subset D(X).
\]
Then, $D(X)=F(X)$ where $F(X)$ denotes the subfield of $\mathbb F_q$ generated by $X$.
\end{lemma}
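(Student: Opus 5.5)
The inclusion $D(X)\subseteq F(X)$ is immediate: every quotient $(a_1-a_2)/(a_3-a_4)$ with $a_i\in X$ lies in the field generated by $X$. So the content is the reverse inclusion $F(X)\subseteq D(X)$. Assume $|X|\ge 2$, so that $D(X)$ is nonempty. Write $D=D(X)$ throughout.

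First I record the elementary closure properties that $D$ has for free.
\begin{enumerate}[(i)]
\item $0,1\in D$.
\item $-D=D$, by swapping $a_1,a_2$.
\item $D\setminus\{0\}$ is closed under inversion, by swapping the numerator and denominator pairs.
\end{enumerate}
Combining (i) and (ii) with the hypothesis $1+D\subseteq D$ gives $\mathbb F_p\subseteq D$ and $D+\mathbb F_p=D$.

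Combining (iii) with $XD\subseteq D$ gives closure under division by nonzero $x\in X$, since $d/x=\bigl(x\cdot d^{-1}\bigr)^{-1}$ for $d\ne0$.

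The strategy is to introduce the stabiliser
\[
T:=\{\lambda\in\mathbb F_q:\ \lambda D\subseteq D\}
\]
and prove that $T$ is a subfield of $\mathbb F_q$ containing $X$. Once this is done, $1\in D$ gives $T=T\cdot 1\subseteq D$, and hence $F(X)\subseteq T\subseteq D$, which proves the lemma.

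By hypothesis $X\subseteq T$. The set $T$ is closed under multiplication, and it contains $-1$ by (ii). Since $\mathbb F_q$ is finite, any nonempty subset of $\mathbb F_q^\ast$ closed under multiplication is a group, so $T\setminus\{0\}$ is a multiplicative group. What remains is additive closure of $T$.

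Additive closure would follow from additive closure of $D$. If $D$ is closed under addition, then for $\lambda,\mu\in T$ and $d\in D$ we get $(\lambda+\mu)d=\lambda d+\mu d\in D$, so $\lambda+\mu\in T$. It therefore suffices to prove $D+D\subseteq D$.

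For this I would first prove the stronger statement $D\cdot D\subseteq D$, i.e. $D\subseteq T$. Given that, take $d,e\in D$ with $e\ne0$. Then $d/e\in D$ by (iii) and multiplicativity, so $d/e+1\in D$ by the hypothesis $1+D\subseteq D$, and finally
\[
d+e=e\,(d/e+1)\in D.
\]

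To prove multiplicativity, I would use the specific shape of elements of $D$, following the argument of Li and Roche-Newton \cite{LiRocheNewton2011}. Write $d=(a_1-a_2)/(a_3-a_4)$. The goal is to show $d\cdot D\subseteq D$ by building $d\cdot e$ out of the allowed operations applied to $e$. Concretely, from $e$ I would form $a_1e/(a_3-a_4)$ and $a_2e/(a_3-a_4)$ by multiplications and divisions by elements of $X$, using the facts that $X\subseteq T$ and that $D$ is closed under division by elements of $X$. I would then connect these two terms through translations by $1$ and inversions. A typical identity to exploit is
\[
\frac{u}{v}+1=\frac{u+v}{v},
\]
which, combined with inversion, lets one trade a sum in a numerator for a translate.

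This multiplicativity step is where I expect the real difficulty to lie. The operations available from the hypotheses are $d\mapsto d+1$, $d\mapsto -d$, $d\mapsto d^{-1}$ and $d\mapsto x^{\pm1}d$ for $x\in X$. These generate Möbius-type maps, and it is not obvious that they can realise multiplication by a general difference quotient. If a direct construction fails, I would use finiteness instead. I would consider the set $D'$ of $\lambda\in D$ with $\lambda D\subseteq D$, and the orbits of $D$ under the group $\Gamma$ generated by the maps above. The aim would then be to show that $\Gamma\cdot 1$ is already an additively closed set, and hence a field containing $X$.
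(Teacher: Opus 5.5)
There is a genuine gap. Everything rests on the claim $D\cdot D\subseteq D$, and that claim is never proved. Your reductions around it are correct: multiplicative closure gives $D+D\subseteq D$ via $d+e=e(d/e+1)$, and that makes your stabiliser $T$ a field containing $X$. But the direct construction you sketch cannot work as written. Dividing by $a_3-a_4$ is not one of your available operations; only division by elements of $X$ is. And combining $a_1e/(a_3-a_4)$ with $a_2e/(a_3-a_4)$ would require exactly the additive closure you are trying to derive. The Möbius/finiteness fallback is only stated as an aim. As it stands, the proposal proves $F(X)\subseteq D$ only conditionally on a claim that is essentially the whole lemma. (The paper itself gives no proof and simply attributes the lemma to Li and Roche-Newton.)

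The missing idea is to stabilise $D$ additively rather than multiplicatively. Let $S=\{t\in\mathbb F_q: D+t\subseteq D\}$. By hypothesis $1\in S$, and $S$ is clearly closed under addition. For $x\in X\setminus\{0\}$, $t\in S$ and $d\in D$,
\[
d+xt \;=\; x\,(d/x+t)\in D .
\]
This uses, in order, the closure under division by $x$ that you already established, then $t\in S$, then $XD\subseteq D$. So $XS\subseteq S$, trivially also for $x=0$. Hence $S$ contains every sum of products of elements of $X\cup\{1\}$, i.e.\ the ring $\mathbb F_p[X]$. This ring is a finite integral domain, so it equals $F(X)$. Since $0\in D$, you get $F(X)=0+F(X)\subseteq D$. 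Together with your trivial inclusion this gives $D=F(X)$, and multiplicative closure of $D$ then comes for free rather than being needed as input.

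In the language of your last paragraph: the conjugate $\mu_x\tau\mu_x^{-1}$ of $\tau\colon d\mapsto d+1$ by $\mu_x\colon d\mapsto xd$ is the translation $d\mapsto d+x$. So $\Gamma$ contains every translation by an element of $F(X)$, and its orbit of $0$ is already $F(X)$. That computation is what your proposal is missing.
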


\section{Proof of Main Result}

\begin{claim}\label{lem:polygeomcriterion}
Let $q=p^2$, and let $A\subset \mathbb F_q$, with $|A|<p$. Let $y\in D(A)$.  Write
\[
R_y(X):=R(X,y)
=
\prod_{z\in yA-A}(X-z)^{r_y(z)},
\]
where $r_y(z):=|\{(a,b)\in A^2:ya-b=z\}|$. If $|yA-A|>p$, for some $y\in D(A)$, then $t(y)=s(y)=1$.
\end{claim}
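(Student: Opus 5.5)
The plan is to argue by contradiction: assume $t(y)\ge p$ and show that then every root of $R_y$ is a root of a monic polynomial of degree exactly $p$, which is impossible once $R_y$ has more than $p$ distinct roots. Since $s(y)\le t(y)$ by \cite[Propositions~9 and 11]{Fancsali2013}, getting $t(y)=1$ forces $s(y)=1$, so it suffices to prove $t(y)=1$.

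Here $U=A\times A$, so $|U|=|A|^2<p^2=q$ and $\deg_X H<|A|^2$. The roots of $R_y$ are exactly the elements of $yA-A$, so by hypothesis $R_y$ has more than $p$ distinct roots. Since $R(X,Y)Q(X,Y)=X^q+H(X,Y)$, specialising gives $R_y(X)\mid X^q+H(X,y)$. Hence every $z\in yA-A$ is a root of $X^q+H(X,y)$.

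Suppose first that $D(A)=\{y\}$, so that $t(y)=q$ by convention and $H(X,y)\equiv a$ is constant. Write $a=-b^q$, which is possible since Frobenius is bijective on $\mathbb F_q$. Then $X^q+a=(X-b)^q$ has a single root, contradicting $|yA-A|>p\ge 2$.

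Otherwise suppose $t(y)\ge p$, so that $H(X,y)=g(X^p)$ for some $g\in\mathbb F_q[X]$. Because $\mathbb F_q$ is perfect, we can take $p$-th roots of the coefficients of $g$ to obtain $h\in\mathbb F_q[X]$ with $g(X^p)=h(X)^p$. Then
\[
X^q+H(X,y)=\bigl(X^{p}+h(X)\bigr)^p ,\qquad \deg h=\frac{\deg_X H(X,y)}{p}\le\frac{|A|^2-1}{p}<p,
\]
using $|A|<p$. So $X^p+h(X)$ is monic of degree exactly $p$ and has at most $p$ distinct roots. On the other hand, every $z\in yA-A$ is a root of $(X^p+h(X))^p$, hence of $X^p+h(X)$. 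This gives more than $p$ distinct roots, a contradiction.

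Therefore $t(y)=1$, and $1\le s(y)\le t(y)$ gives $s(y)=1$.

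The only point needing care is the degree bookkeeping. The bound $\deg h<p$ must come from $\deg_X H<|U|=|A|^2<p^2$, which is exactly where the assumption $|A|<p$ enters. One must also check that $X^p+h$ keeps leading term $X^p$ and so is genuinely of degree $p$, which again follows from $\deg h<p$.
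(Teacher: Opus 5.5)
Your proof is correct and is essentially the paper's argument. Both show that if $H(X,y)\in\mathbb F_q[X^p]$, then $R_yQ_y=X^q+H(X,y)$ is a $p$-th power of degree $p^2$, so it has at most $p$ distinct roots, contradicting $|yA-A|>p$. The paper phrases this as a multiplicity count forcing $\deg Q_y\ge p|yA-A|-|A|^2$, while you extract the $p$-th root $X^p+h$ explicitly. The only cosmetic difference is that you obtain $s(y)=1$ from the cited inequality $s(y)\le t(y)$, whereas the paper reads it off directly from $1\le r_y(z)\le |A|<p$.
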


\begin{proof}
For an arbitrary $y\in D(A)$, suppose that $P(X):=R_y(X)Q_y(X)\in \mathbb F_q[X^p]$. Thus, the multiplicity of a root $z\in yA-A$ of $P$ must be
divisible by $p$. But $z$ occurs in $R_y$ with multiplicity $r_y(z)$, where
\begin{equation}\label{eqn:ryzbound}
1\le r_y(z)\le |A|<p.
\end{equation}
Thus $Q_y$ must contain $X-z$ with multiplicity at least $p-r_y(z)$. Summing over all $z\in yA-A$, we get
\[
\deg Q_y
\ge
\sum_{z\in yA-A}(p-r_y(z))=
p|yA-A|-\sum_{z\in yA-A}r_y(z)
=
p|yA-A|-|A|^2.
\]
On the other hand, by the definition of $Q$, we get $\deg Q_y=p^2-|A|^2$. Hence if $|yA-A|> p$,
$R_y(X)Q_y(X)\notin \mathbb F_q[X^p]$.  By definition, $R_y(X)Q_y(X)=X^q+H(X,y)$, and so $H(X,y)\notin\mathbb F_q[X^p]$, which in turn forces $t(y)=1.$ The fact that $s(y)=1$ follows simply from \eqref{eqn:ryzbound}. 
\end{proof}

\begin{proof}[Proof of Theorem~\ref{thm:main}]
After replacing $A$ by an affine image
\[
A'=\frac{A-a_0}{a_1-a_0},
\qquad a_0,a_1\in A,\ a_0\ne a_1,
\]
we may assume that $0,1\in A$.

This normalisation preserves $|A|$, the condition $A\not\subset c\mathbb F_p+d$,
and $D(A)$. Since $0,1\in A$, we have $A\subset D(A)$. We show that either there exists $r\in D(A)$ such that $|rA-A|>p$, or else $D(A)=\mathbb F_q$.

Suppose, for contradiction, that
\begin{equation}\label{eqn:rA-ACon}
|rA-A|\le p
\qquad\text{for every }r\in D(A).
\end{equation}
We prove that $1+D(A)\subset D(A)
$ and $A D(A)\subset D(A)$. First suppose that $1+D(A)\not\subset D(A)$. Then there exists $r\in D(A)$ such that $1+r\notin D(A)$. Note that $D(A) = -D(A).$

By Lemmas~\ref{lem:dilatesumsetcardinality} and \ref{lem:plunneckerefined}, we have
for some subset $A_0\subseteq A$ with $|A_0|\gg |A|$, we get
\[
|A|^2 \ll |A_0+A+rA|
\leq
\frac{|A+A|\,|A+rA|}{|A|}\leq \frac{p^2}{|A|},
\]
and therefore $|A|\ll p^{2/3}$.
This contradicts the hypothesis $|A|>p^{2/3}$, up to constants. Hence $1+D(A)\subset D(A)$.

Next suppose that $A D(A)\not\subset D(A)$. Then there exist $a\in A$ and $r\in D(A)$ such that $ar\notin D(A)$. By Lemma~\ref{lem:dilatesumsetcardinality} and Lemma~\ref{lem:plunnecke}, and since $a\in A\subset D(A)$,
we obtain
\[
|A|^2 = |A+arA|
\le
\frac{|A+aA|\,|A+rA|}{|A|}
\le
\frac{p^2}{|A|}.
\]
Thus, $|A|\le p^{2/3}$
again contradicting the hypothesis. Therefore $A D(A)\subset D(A)$.

Invoking  Lemma~\ref{lem:dirsubfieldcriterion}, we have $D(A)=F(A)$, where $F(A)$ is the subfield of $\mathbb F_q$ generated by $A$. Suppose
$D(A)=\mathbb F_p$. Then, $A\subset D(A) =\mathbb F_p$. Undoing the affine normalisation, we get $A\subset c\mathbb F_p+d$, for some $c, d\in \F_q$, contradicting our assumption. If $D(A)=\mathbb F_q,$ we get $|D(A)|=q>|A|^2$. It remains to consider the case where \eqref{eqn:rA-ACon} fails. Then, there exists $y\in D(A)$ such that $|yA-A|>p$. By Claim~\ref{lem:polygeomcriterion}, we get $t=t(y)=s=s(y)=1$.
Applying Theorem~\ref{thm:FancsaliFq}, we obtain $|D(A)|\ge (|A|^2+1)/2,$ as required.
\end{proof}

\end{document}